\theoremstyle{plain}
\newtheorem{theorem}[subsection]{Theorem}
\newtheorem{proposition}[subsection]{Proposition}
\newtheorem{corollary}[subsection]{Corollary}
\theoremstyle{definition}
\newtheorem{example}[subsection]{Example}
\newtheorem{remark}[subsection]{Remark}
\newcommand{\C}{\ensuremath{\mathbb{C}}}
\newcommand{\V}{\ensuremath{\mathbb{V}}}
\newcommand{\Eq}{ \ensuremath{\mathrm{Eq}} }
\newcommand{\map}[2]{ \ensuremath{ \xymatrix@1@C=15pt{ #1 \ar[r] & #2 } } }
\newcommand{\mono}[2]{ \ensuremath{ \xymatrix@1@C=15pt{ #1\; \ar@{ >->}[r] & #2 } } }
\newcommand{\regepi}[2]{ \ensuremath{ \xymatrix@1@C=15pt{ #1 \ar@{>>}[r] & #2 } } }
\def\pullback{
	\ar@{-}[]+R+<6pt,-1pt>;[]+RD+<6pt,-6pt>%
	\ar@{-}[]+D+<1pt,-6pt>;[]+RD+<6pt,-6pt>}
\begin{document}
\title[Variations of the Shifting Lemma and Goursat categories]%
{Variations of the Shifting Lemma and Goursat categories}

\author{ Marino Gran}
\address{Institut de Recherche en Math\'ematique et Physique, Universit\'e Catholique de Louvain, Chemin du Cyclotron 2,
	1348 Louvain-la-Neuve, Belgium}

\email{marino.gran@uclouvain.be}

\author[Diana Rodelo]{Diana Rodelo}
\address{Departamento de Matem\'atica, Faculdade de Ci\^{e}ncias e Tecnologia, Universidade
	do Algarve, Campus de Gambelas, 8005-139 Faro, Portugal and CMUC, Department of Mathematics, University of Coimbra, 3001-501 Coimbra, Portugal}
\thanks{The second author acknowledges partial financial assistance by Centro de Matem\'{a}tica da
	Universidade de Coimbra---UID/MAT/00324/2013, funded by the
	Portuguese Government through FCT/MCTES and co-funded by the
	European Regional Development Fund through the Partnership
	Agreement PT2020.}
\email{drodelo@ualg.pt}

\author{ Idriss Tchoffo Nguefeu}
\address{Institut de Recherche en Math\'ematique et Physique, Universit\'e Catholique de Louvain, Chemin du Cyclotron 2,
	1348 Louvain-la-Neuve, Belgium}
\thanks{The third author acknowledges financial assistance by Fonds de la Recherche Scientifique-FNRS Cr\'edit Bref S\'ejour \`a l'\'etranger 2018/V 3/5/033 - IB/JN - 11440.}
\email{idriss.tchoffo@uclouvain.be}

\keywords{Mal'tsev categories, Goursat categories, Shifting Lemma, congruence modular varieties, $3$-permutable varieties.}

\subjclass[2010]{
	08C05, 
	08B05, 
	08A30, 
	08B10, 
	18C05, 
	18B99, 
	18E10} 

\begin {abstract}
We prove that Mal'tsev and Goursat categories may be characterized through variations of the Shifting Lemma, that is classically expressed in terms of three congruences $R$, $S$ and $T$, and characterizes congruence modular varieties. We first show that a regular category $\C$ is a Mal'tsev category if and only if the Shifting Lemma holds for reflexive relations on the same object in $\C$.
Moreover, we prove that a regular category $\C$ is a Goursat category if and only if the Shifting Lemma holds for a reflexive relation $S$ and reflexive and positive relations $R$ and $T$ in $\C$. In particular this provides a new characterization of $2$-permutable and $3$-permutable varieties and quasi-varieties of universal algebras.
\end {abstract}


\date{\today}

\maketitle

{\section*{Introduction}}
For a variety $\mathbb V$ of universal algebras, Gumm's \emph{Shifting Lemma}~\cite{Gumm} is stated as follows. Given congruences $R,S$ and $T$ on the same algebra $X$ in $\mathbb V$ such that $R\wedge S\leqslant T$, whenever $x,y,u,v$ are elements in $X$ with $(x,y)\in R \wedge T$, $(x,u) \in S$, $(y,v)\in S$ and $(u,v)\in R$, it then follows that $(u,v) \in T$. We display this condition as 
\begin{equation}\label{SL}
\vcenter{\xymatrix@C=30pt{
		x \ar@{-}[r]^-S \ar@{-}[d]^-R \ar@(l,l)@{-}[d]_-T & u \ar@{-}[d]_-R \ar@(r,r)@{--}[d]^-T \\
		y \ar@{-}[r]_-S  & v. }}
\end{equation}

A variety $\mathbb V$ of universal algebras satisfies the Shifting Lemma precisely when it is congruence modular~\cite{Gumm}, this meaning that the lattice of congruences on any algebra in $\mathbb V$ is modular. In particular, since any $3$-permutable variety is congruence modular \cite{Jonsson53}, it always satisfies the Shifting Lemma. Recall that a variety $\V$ is \emph{$3$-permutable} when, given any congruences $R$ and $S$ on the same algebra $X$, we have the equality
\begin{equation}\label{3-p}
RSR=SRS,
\end{equation}
where
$$
RSR \!=\! \{ (x,u) \!\in\! X \times X \mid \exists_{y, z \in X}  \, ( (x,y) \in R \, \& \,  (y,z) \in S \, \& \,  (z,u) \in R) \}
$$
and
$$
SRS \!=\!  \{ (x,u) \!\in\! X \times X \mid  \exists_{y, z \in X}  \, ( (x,y) \in S \, \& \,  (y,z) \in R \, \& \,  (z,u))  \in S \}
$$
are the usual composites of congruences. Such varieties are characterized ~\cite{HM} by the existence of two ternary terms $r(x,y,z)$ and $s(x,y,z)$ satisfying the identities
$$
\begin{array}{l}
r(x,y,y)=x \\
r(x,x,y)=s(x,y,y) \\
s(x,x,y)=y.
\end{array}
$$

Any $2$-permutable variety \cite{Smith} is necessarily $3$-permutable, and there are known examples of varieties that are $3$-permutable, but fail to be $2$-permuta\-ble, such as the varieties of right complemented semigroups~\cite{HM} and the one of implication algebras~\cite{M}.

The notions of $2$-permutability and $3$-permutability can be extended from varieties to \emph{regular} categories by replacing \emph{congruences} with \emph{internal equivalence relations}, allowing one to explore some interesting new (non-varietal) examples. A categorical version of the Shifting Lemma (stated differently from the original formulation recalled above) may be considered in any finitely complete category, and this leads to the notion of a \emph{Gumm category}~\cite{BG0, BG1}. In a regular context one can use set-theoretic terms thanks to Barr's embedding theorem~\cite{Barr} (see also Metatheorem A.5.7 in~\cite{BB}), so that the property given in diagram~\eqref{SL} may still be expressed by using generalised elements. In a regular context one can show that the property of $3$-permutability still implies the modularity of the lattice of equivalence relations \cite{CKP}, and that this latter property implies that the Shifting Lemma holds. However, the converse is false even in the case of a variety of infinitary algebras, as it was shown by G. Janelidze in Example 12.5 in~\cite{GJ}.

Regular categories that are $2$-permutable, or $3$-permutable, are usually called \emph{Mal'tsev categories} \cite{CLP} and \emph{Goursat categories} \cite{CKP}, respectively. As examples of regular Mal'tsev categories that are not (finitary) varieties of algebras we list: $\mathrm{C}^*$-algebras, compact groups, topological groups \cite{CKP}, torsion-free abelian groups, reduced commutative rings, cocommutative Hopf algebras over a field \cite{GSV}, any abelian category, and the dual of any topos \cite{CKP}.

In the varietal context, H.-P. Gumm has also considered a slight variation of the Shifting Lemma called the \emph{Shifting Principle} \cite{Gumm}: given congruences $R$ and $T$ and a reflexive, symmetric and compatible relation $S$ on the same algebra $X$ such that $R\wedge S\leqslant T\leqslant R$, whenever $x,y,u,v$ are elements in $X$ with $(x,y)\in R \wedge T$, $(x,u) \in S$, $(y,v)\in S$ and $(u,v)\in R$ as in \eqref{SL}, it then follows that $(u,v) \in T$. The Shifting Principle, although apparently stronger, turns out to be equivalent to the Shifting Lemma in the varietal case.

With this observation in mind, it seems reasonable to expect that considering variations on the assumptions on the relations $R,S$ or $T$ appearing in the Shifting Lemma might provide characterizations of other types of categories. The variations we have in mind for $R,S$ and $T$ are to make those assumptions weaker, so that they give stronger versions of the Shifting Lemma. This idea comes from the well known characterization of Mal'tsev categories through the fact that reflexive relations are equivalence relations~\cite{CLP, CPP}, and from a more recent one of Goursat categories recalled in Proposition~\ref{Goursat positive}: a regular category is a Goursat category if and only if any reflexive and positive relation (i.e. a relation of the form $U^{\circ}U$, for some relation $U$) is an equivalence relation.

The main results of the present paper show that stronger versions of the Shifting Lemma characterize regular categories that are Mal'tsev categories (Theorems~\ref{Mal'tsev <=> SL} and \ref{Mal'tsev <=> SL 2}), and that are Goursat categories (Theorem~\ref{Goursat <=> SL}). These results apply in particular to $2$-permutable and $3$-permutable quasi-varieties, since these latter categories are known to be regular. \\

\section{Regular categories and relations}

A morphism in a category $\mathbb C$ is a regular epimorphism if it is a coequaliser of a pair of parallel morphisms in $\mathbb C$.
A category $\mathbb C$ with finite limits is called \emph{regular} if any morphism $f$ admits a (unique up to isomorphism) factorization $f=m r $, where $r$ is a regular epimorphism and $m$ is a monomorphism, and these factorizations are pullback stable. Since monomorphisms are always pullback stable, this latter property can be equivalently expressed by asking that in any pullback
$$
\xymatrix{E \times_B A \ar@{->>}[r]^-{\pi_2} \ar[d]_{\pi_1} \pullback & A \ar[d]^f \\
	E \ar@{->>}[r]_p & B
}
$$
the arrow $\pi_2$ is a regular epimorphism whenever so is $p$.
\begin{example}
	Any variety of universal algebras is a regular category, where regular epimorphisms are surjective homomorphisms, and finite limits (in particular, pullbacks) are computed as in the category of sets. The same is true for any quasi-variety of algebras (see \cite{PV}, for example). The factorization of any homomorphism $f \colon X \rightarrow Y$ as a regular epimorphism followed by a monomorphism is simply the factorization $ X \rightarrow f(X) \rightarrow Y$, where $f(X)$ is the direct image of $f$.
	A thorough study of quasi-varieties with modular lattice of congruences can be found in \cite{Kearnes}.
\end{example}
In this article, we mainly work in a regular category, thus the proofs are partially given in set-theoretic terms (see Metatheorem $A.5.7$ in \cite{BB}, for instance).
An (internal) \emph{relation} $R$ from  $X$ to  $Y$  is a subobject $\langle r_1,r_2 \rangle \colon \mono{R}{X \times Y}$. The opposite relation of $R$, denoted $R^{\circ}$, is the relation from $Y$ to $X$ given by the subobject $\langle r_2,r_1 \rangle\colon \mono{R}{Y \times X}$. A relation $R$ from $X$ to $X$ is called a relation on $X$. Given two relations $\langle r_1,r_2 \rangle \colon \mono{R}{X \times Y}$ and $\langle s_1,s_2 \rangle  \colon \mono{S}{Y \times Z}$ in a regular category, their relational composite $\mono{SR}{X \times Z}$ can be defined as follows: take the pullback 
$$\xymatrix{R \times_Y S \ar[r]^-{\pi_2} \ar[d]_{\pi_1} \pullback & S \ar[d]^{s_1}\\
	R \ar[r]_{r_2} & Y
}
$$
of $r_2$ and $s_1$, and the (regular epimorphism, monomorphism) factorization of $\langle r_1\pi_1,s_2\pi_2\rangle$:
$$
\xymatrix@R=5pt{R\times_Y S \ar[rr]^-{\langle r_1\pi_1,s_2\pi_2\rangle} \ar@{>>}[dr] & & X\times Z. \\
& SR \ar@{ >->}[ur]}
$$

A relation $\mono{R}{X\times X}$ on $X$ is called \emph{reflexive} when $1_X\leqslant R$, where $1_X$ denotes the relation $\langle 1_X,1_X\rangle \colon \mono{X}{X\times X}$. It is called \emph{symmetric} when $R^{\circ}\leqslant R$ (or, equivalently $R^\circ=R$), and \emph{transitive} when $RR\leqslant R$. A reflexive, symmetric and transitive relation is called an \emph{equivalence} relation (it then follows that $RR=R$). When the regular category is a variety $\mathbb V$, an equivalence relation in $\mathbb V$ is simply a congruence.

A relation $\mono{D}{X\times Y}$ is called \emph{difunctional} if $(x,y)\in D$
whenever $
(x,v)\in D, (u,v)\in D, (u,y)\in D$. In a finitely complete category this property can be expressed by the equality
$DD^\circ D=D$.
A relation $\mono{P}{X\times X}$ on $X$ is called \emph{positive} when it is of the form $P=U^\circ U$, for some relation $\mono{U}{X\times Y}$ \cite{Sel}. In set-theoretic terms, $P$ is positive when there exists a relation $U$ with $(x,x')\in P$ if and only if $(x,y)\in U$ and $(x',y)\in U$ for some $y$. It is easy to see that any positive relation is symmetric, and that any equivalence relation $R$ is positive since $R=R^\circ R$.

\section{Regular Mal'tsev categories and the Shifting Lemma}

A finitely complete category $\C$ is called a \emph{Mal'tsev category} if every reflexive relation in $\C$ is an equivalence relation~\cite{CLP,CPP}. These categories are also characterized by other properties on relations, as follows:

\begin{theorem}\label{Mal'tsev chars} \emph{\cite{CPP}}
	Let $\C$ be a regular category. The following conditions are equivalent:
	\begin{enumerate}
		\item[(i)] $\C$ is a Mal'tsev category;
		\item[(ii)] every relation $\mono{D}{X\times Y}$ in $\C$ is difunctional;
		\item[(iii)] every reflexive relation $E$ in $\C$ is symmetric: $E^\circ =E$.
	\end{enumerate}
\end{theorem}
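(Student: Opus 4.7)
The plan is to prove the cycle $(i) \Rightarrow (iii) \Rightarrow (ii) \Rightarrow (i)$, with essentially all the content concentrated in $(iii) \Rightarrow (ii)$. The implication $(i) \Rightarrow (iii)$ is immediate, since equivalence relations are symmetric by definition. For $(ii) \Rightarrow (i)$, suppose every relation is difunctional and let $R$ be a reflexive relation on $X$, so that $1_X \leqslant R$ and, dually, $1_X \leqslant R^\circ$. The difunctionality equation $R R^\circ R = R$ then yields both symmetry
\[
R^\circ = 1_X \cdot R^\circ \cdot 1_X \leqslant R R^\circ R = R
\]
and transitivity
\[
R R = R \cdot R \cdot 1_X \leqslant R \cdot R^\circ \cdot R = R,
\]
so that $R$ is automatically an equivalence relation.

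The main work is $(iii) \Rightarrow (ii)$. Given a relation $\langle d_1, d_2\rangle \colon D \hookrightarrow X \times Y$, I would associate to it the following reflexive relation $\widetilde D$ on the object $D$ itself: define $\widetilde D \hookrightarrow D \times D$ as the pullback of $\langle d_1, d_2\rangle$ along $\langle d_1 \pi_1, d_2 \pi_2 \rangle \colon D \times D \to X \times Y$, where $\pi_1, \pi_2$ are the product projections. Via Barr's metatheorem this amounts to
\[
\widetilde D = \{\, ((a, b), (c, d)) \in D \times D \mid (a, d) \in D \,\}.
\]
Reflexivity of $\widetilde D$ is witnessed by the diagonal $1_D \colon D \to D \times D$: for $(a, b) \in D$ the pair $((a, b), (a, b))$ lies in $\widetilde D$ precisely because $(a, b) \in D$. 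Condition $(iii)$ therefore forces $\widetilde D$ to be symmetric.

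Symmetry of $\widetilde D$ translates into the implication: whenever $(a, b), (c, d) \in D$, if $(a, d) \in D$ then $(c, b) \in D$. To extract difunctionality of $D$, suppose $(x, v), (u, v), (u, y) \in D$; set $(a, b) := (u, y)$ and $(c, d) := (x, v)$, both of which lie in $D$. The hypothesis $(u, v) \in D$ is exactly the condition $(a, d) \in D$, so symmetry of $\widetilde D$ yields $(c, b) = (x, y) \in D$. This proves $D D^\circ D \leqslant D$, the reverse inclusion being automatic. The principal obstacle is locating the correct auxiliary reflexive relation $\widetilde D$; once it has been defined, the argument reduces to a short element chase that is legitimized by the regular embedding theorem.
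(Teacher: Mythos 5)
Your proof is correct. The paper states this theorem as a citation to \cite{CPP} without reproducing a proof, and your argument is the standard one: the auxiliary reflexive relation $\widetilde D$ on the object $D$ (the pullback of $D$ along $\langle d_1\pi_1,d_2\pi_2\rangle$) is exactly the device the paper itself uses to build the relations $T$ and $R$ in the proof of Theorem~\ref{Mal'tsev <=> SL}, so your route matches the intended one.
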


Regular Mal'tsev categories may be characterized by the fact that they are $2$-permutable: for any pair of equivalence relations $R,S$ on the same object, $RS=SR$~\cite{CLP}. If $\C$ is a regular Mal'tsev category, then the lattice of equivalence relations on the same object is modular~\cite{CKP} and, consequently, the Shifting Lemma holds.

Using the fact that in a Mal'tsev category reflexive relations coincide with equivalence relations, or with symmetric relations, we are now going to show that regular Mal'tsev categories may be characterized through a stronger version of the Shifting Lemma where, in the assumption, the equivalence relations are replaced by reflexive relations. Note that, for a diagram such as \eqref{SL} where $R,S$ or $T$ are not equivalence relations, the relations are always to be considered from left to right and from top to bottom.

\begin{theorem}
	\label{Mal'tsev <=> SL} Let $\C$ be a regular category. The following conditions are equivalent:
	\begin{enumerate}
		\item[(i)] $\C$ is a Mal'tsev category;
		\item[(ii)] The Shifting Lemma holds in $\C$ when $R,S$ and $T$ are reflexive relations.
	\end{enumerate}
\end{theorem}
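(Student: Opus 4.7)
The plan is to prove the two implications separately, using Theorem~\ref{Mal'tsev chars} as the main bridge; thanks to Metatheorem~A.5.7 of~\cite{BB} one may argue set-theoretically throughout.

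For $(i)\Rightarrow(ii)$, I would simply observe that in a regular Mal'tsev category every reflexive relation is automatically an equivalence relation by Theorem~\ref{Mal'tsev chars}, and that the lattice of equivalence relations on any object is modular in such a category (as already recalled in the text preceding the statement). Consequently the classical Shifting Lemma, valid for equivalence relations, specialises to give the Shifting Lemma for reflexive $R,S,T$.

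For the non-trivial direction $(ii)\Rightarrow(i)$, I would use the characterisation of Mal'tsev categories in Theorem~\ref{Mal'tsev chars}(iii): it suffices to show that every reflexive relation $E$ on any object $X$ is symmetric, i.e.\ $E^{\circ}\leqslant E$. Given a generalised element $(p,q)\in E^{\circ}$, meaning $(q,p)\in E$, I want to produce $(p,q)\in E$ by a well-chosen application of the hypothesis. The idea is to take
\[
R := E^{\circ},\qquad S := E,\qquad T := E,
\]
which are all reflexive (since $E$ is, and the opposite of a reflexive relation is reflexive), and which satisfy $R\wedge S = E^{\circ}\wedge E \leqslant E = T$ trivially. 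Then I instantiate the four elements in diagram~\eqref{SL} by the degenerate choice
\[
x = y = q,\qquad u = p,\qquad v = q.
\]
The hypotheses of the Shifting Lemma all hold: $(x,y)=(q,q)\in R\wedge T$ and $(y,v)=(q,q)\in S$ by reflexivity of $E$, $(x,u)=(q,p)\in S=E$ is the datum $(q,p)\in E$, and $(u,v)=(p,q)\in R=E^{\circ}$ is the same datum read in $E^{\circ}$. The conclusion $(u,v)\in T$ then reads $(p,q)\in E$, which is exactly what is needed.

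Both implications are short; the only ``trick'' is choosing $R=E^{\circ}$, $S=T=E$ together with the collapse $x=y=q$, which turns the Shifting Lemma into a pure symmetry statement about $E$. The main conceptual obstacle is recognising that Theorem~\ref{Mal'tsev chars}(iii) reduces the problem to symmetry rather than to full transitivity, so that a single reflexive relation $E$ (together with its opposite) is enough to feed into the Shifting Lemma, without needing to build any auxiliary relations.
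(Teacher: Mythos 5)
Your proof is correct, but the crucial direction (ii) $\Rightarrow$ (i) takes a genuinely different --- and shorter --- route than the paper's. You instantiate the Shifting Lemma directly on the object $X$ with $R=E^{\circ}$, $S=T=E$ and the collapsed square $x=y=q$, $u=p$, $v=q$; all hypotheses do check out ($E^{\circ}$ is reflexive, $E^{\circ}\wedge E\leqslant E$ trivially, and the four edge conditions reduce to reflexivity of $E$ plus the single datum $(q,p)\in E$ read once in $E$ and once in $E^{\circ}$), and the conclusion is exactly $E^{\circ}\leqslant E$, hence $E^{\circ}=E$, so Theorem~\ref{Mal'tsev chars}(iii) applies. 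The paper instead builds two auxiliary reflexive relations $R$ and $T$ on the object $E$ itself (via pullbacks along $e_1\times e_2$ and $e_2\times e_1$) and takes $S=\Eq(e_2)$, applying the Shifting Lemma to elements of $E$. The payoff of that extra work is that the paper's $S$ is a kernel pair, hence an equivalence relation, so the very same computation proves the sharper Corollary~\ref{Mal'tsev <=> SL 2}, in which only $R$ and $T$ are allowed to be merely reflexive; your argument, whose $S=E$ is only reflexive, establishes the theorem as stated but would not yield that corollary. Your degenerate-element trick is, incidentally, exactly the style the paper itself adopts later for the Goursat case in Theorem~\ref{Goursat <=> SL}, where $R=EE^{\circ}$, $T=E^{\circ}E$ and $S=E$ are fed into a similarly collapsed square on $X$. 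The direction (i) $\Rightarrow$ (ii) of your proposal coincides with the paper's.
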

\begin{proof}
	(i) $\Rightarrow$ (ii) This implication follows from the fact that reflexive relations are necessarily equivalence relations and the Shifting Lemma holds in any regular Mal'tsev category. \\
	(ii) $\Rightarrow$ (i) We shall prove that every reflexive relation $E$ 
is symmetric (which suffices by Theorem~\ref{Mal'tsev chars}(iii)). Suppose that $(x,y)\in E$, and consider the reflexive relations $T$ and $R$ on $E$ defined by the following pullbacks
	$$
	\xymatrix{
		T \ar@{ >->}[d]_-{\langle t_1,t_2\rangle} \ar[r] \pullback & E \ar@{ >->}[d]^-{\langle e_1,e_2\rangle} \\
		E\times E \ar[r]_-{e_1\times e_2} & X\times X }\hspace{5pt}\mathrm{and}\hspace{5pt}
	\xymatrix{
		R \ar[rr] \ar@{ >->}[d]_-{\langle r_1,r_2\rangle} \pullback & & E \ar@{ >->}[d]^-{\langle e_1,e_2\rangle} \\
		E\times E \ar[r]_-{e_2\times e_1} & X\times X \ar[r]^-{\cong}_-{\langle \pi_2,\pi_1\rangle} & X\times X, }
	$$
	where $\pi_1 \colon X \times X \rightarrow X$ and $\pi_2 \colon X \times X \rightarrow X$ are the product projections. We have $(aEb,cEd)\in T$ if and only if $(a,d)\in E$, and $(aEb, cEd)\in R$  if and only if $(c,b)\in E$.
	
	The third reflexive relation on $E$ we consider is the \emph{kernel pair} $\Eq(e_2)$ of $e_2$, defined as the following pullback
	$$\xymatrix{{\Eq(e_2)}   \ar[r] \ar[d] \pullback& E \ar[d]^{e_2} \\
		E \ar[r]_{e_2} & X.}
	$$
	$\Eq(e_2)$ is an equivalence relation,  with the property that $\Eq(e_2)\leqslant R$ and $\Eq(e_2)\leqslant T$, so that $R\wedge \Eq(e_2)=\Eq(e_2)\leqslant T$. We may apply the assumption to the following relations given in solid lines
	$$
	\xymatrix@C=35pt{
		xEy \ar@{-}[r]^-{\Eq(e_2)} \ar@{-}[d]^-R \ar@(l,l)@{-}[d]_-T & yEy \ar@{-}[d]_-R \ar@(r,r)@{--}[d]^-T \\
		xEx \ar@{-}[r]_-{\Eq(e_2)}  & xEx }
	$$
	($xEx$ and $yEy$ by the reflexivity of the relation $E$). We conclude that $(yEy,xEx)\in T$ and, consequently, that $(y,x)\in E$.
\end{proof}

In the proof of the implication (ii) $\Rightarrow$ (i) we only used two ``true'' reflexive relations $R$ and $T$. This observation gives:

\begin{corollary} \label{Mal'tsev <=> SL 2}
	Let $\C$ be a regular category. The following conditions are equivalent:
	\begin{enumerate}
		\item[(i)] $\C$ is a Mal'tsev category;
		\item[(ii)] The Shifting Lemma holds in $\C$ when $R$ and $T$ are reflexive relations and $S$ is an equivalence relation.
	\end{enumerate}
\end{corollary}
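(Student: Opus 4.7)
The plan is to show that this corollary is essentially a refinement of Theorem~\ref{Mal'tsev <=> SL}, obtained by inspecting its proof more carefully rather than by running a genuinely new argument.

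For the direction (i) $\Rightarrow$ (ii), I would simply invoke Theorem~\ref{Mal'tsev <=> SL}: in a regular Mal'tsev category the Shifting Lemma holds whenever all three of $R$, $S$, $T$ are reflexive, and in particular it holds in the more restricted setting where $S$ is additionally assumed to be an equivalence relation.

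The direction (ii) $\Rightarrow$ (i) is the substantive one, and the crucial observation is already visible in the proof of Theorem~\ref{Mal'tsev <=> SL}: the relation playing the role of $S$ there is the kernel pair $\Eq(e_2)$, which is automatically an equivalence relation, while only $R$ and $T$ are ``true'' reflexive relations. Concretely, given a reflexive relation $E$ on some object $X$ with $(x,y) \in E$, I would reconstruct the two reflexive relations $R$ and $T$ on $E$ defined by the pullbacks from the theorem's proof, set $S := \Eq(e_2)$, verify again that $R \wedge \Eq(e_2) = \Eq(e_2) \leqslant T$, and then apply the present (weaker) hypothesis (ii) to the same shifting diagram. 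This yields $(yEy, xEx) \in T$ and hence $(y,x) \in E$, so every reflexive relation $E$ is symmetric; Theorem~\ref{Mal'tsev chars}(iii) then forces $\C$ to be a Mal'tsev category.

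There is essentially no new obstacle here: the corollary comes for free once one observes that, in the witness constructed in the proof of the preceding theorem, the $S$-slot is always occupied by a kernel pair. The interest of the statement lies precisely in this strengthening of the hypothesis on $S$ from ``reflexive'' to ``equivalence'', and not in any additional technical work.
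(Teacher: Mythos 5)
Your proposal is correct and matches the paper's own justification exactly: the paper derives the corollary from the observation that, in the proof of Theorem~\ref{Mal'tsev <=> SL}(ii) $\Rightarrow$ (i), the $S$-slot is occupied by the kernel pair $\Eq(e_2)$, which is an equivalence relation, while only $R$ and $T$ need to be genuinely reflexive relations. The (i) $\Rightarrow$ (ii) direction is likewise handled as an immediate weakening of the theorem's condition (ii).
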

\begin{example}
Let $\mathbb T$ be the algebraic theory of a Mal'tsev variety, and ${\mathbb T}(\mathsf{Top})$ the category of \emph{topological Mal'tsev algebras}. This category is regular, essentially because the regular epimorphisms are the \emph{open} surjective homomorphisms \cite{JP}, that are stable under pullbacks. The category ${\mathbb T}(\mathsf{Top})$ is also a Mal'tsev category, so that ${\mathbb T}(\mathsf{Top})$ satisfies the Shifting Lemma for reflexive relations (by Theorem \ref{Mal'tsev <=> SL}). The same is true for the exact Mal'tsev category ${\mathbb T}(\mathsf{Comp})$ of \emph{compact Hausdorff Mal'tsev algebras}. 
\end{example}

\section{Goursat categories and the Shifting Lemma}

A regular category $\C$ is called a \emph{Goursat} category \cite{CKP}  when it is $3$-permuta\-ble, i.e. for any pair of equivalence relations $R$ and $S$ on the same object in $\C$ one has $$RSR=SRS.$$

\begin{remark}\label{join} As already observed in \cite{CLP}, for any pair of equivalence relations $R$ and $S$ on the same object $X$ in a Goursat category, one has that $RSR$ is an equivalence relation, that is then the supremum $R \vee S$ of $R$ and $S$ as equivalence relations on $X$
	$$R\vee S=RSR.$$
\end{remark}

The $3$-permutable version of Theorem~\ref{Mal'tsev chars} is:

\begin{theorem}\label{Goursat chars} \emph{\cite{CKP}}
	Let $\C$ be a regular category. The following conditions are equivalent:
	\begin{enumerate}
		\item[(i)] $\C$ is a Goursat category;
		\item[(ii)] for any relation $\mono{D}{X\times Y}$ in $\C$, $DD^\circ DD^\circ=DD^\circ$;
		\item[(iii)] for any reflexive relation $E$ in $\C$, $EE^\circ$ is an equivalence relation;
		\item[(iv)] for any reflexive relation $E$ in $\C$, $EE^\circ=E^\circ E$.
	\end{enumerate}
\end{theorem}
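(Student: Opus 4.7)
The plan is to prove the cycle $(i) \Rightarrow (ii) \Rightarrow (iii) \Rightarrow (iv) \Rightarrow (i)$. Two of the implications are essentially immediate. For $(ii) \Rightarrow (iii)$, note that if $E$ is reflexive, then $EE^\circ$ is automatically reflexive and symmetric, so the hypothesis $(ii)$ supplies the missing transitivity. For $(iv) \Rightarrow (i)$, given equivalence relations $R$ and $S$ on the same object, the composite $RS$ is reflexive, so $(iv)$ applied to $E = RS$ yields $(RS)(RS)^\circ = (RS)^\circ(RS)$, which collapses to $RSR = SRS$ using $R^\circ = R = RR$ and $S^\circ = S = SS$.

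For $(iii) \Rightarrow (iv)$, the key observation is that $E^\circ$ is reflexive whenever $E$ is, so $(iii)$ applies to both and makes $EE^\circ$ and $E^\circ E$ equivalence relations. Since $E$ and $E^\circ$ are each contained in $EE^\circ$ (which is reflexive, symmetric and contains $E$), the transitivity of $EE^\circ$ forces $E^\circ E \leqslant (EE^\circ)(EE^\circ) = EE^\circ$, and the reverse inclusion follows by the symmetric argument.

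The substantive implication is $(i) \Rightarrow (ii)$. The plan is to write a general relation $\mono{D}{X \times Y}$ as the composite of its two projections, $D = d_2 d_1^\circ$, so that $D^\circ = d_1 d_2^\circ$ and $d_i^\circ d_i = \Eq(d_i)$. A direct expansion then gives
$$DD^\circ = d_2\, \Eq(d_1)\, d_2^\circ \qquad \text{and} \qquad DD^\circ DD^\circ = d_2\, \Eq(d_1)\, \Eq(d_2)\, \Eq(d_1)\, d_2^\circ.$$
The Goursat hypothesis $(i)$, applied to the equivalence relations $\Eq(d_1)$ and $\Eq(d_2)$ on the object $D$, rewrites the central triple as $\Eq(d_2)\, \Eq(d_1)\, \Eq(d_2)$, and absorbing the outer factors via $d_2\, \Eq(d_2) = d_2$ and $\Eq(d_2)\, d_2^\circ = d_2^\circ$ collapses the expression back to $DD^\circ$.

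The main obstacle lies in this first implication. Although the manipulations above look purely formal, each identity in the calculus of relations (composition realized as pullback-then-image, $d_i^\circ d_i = \Eq(d_i)$, and the absorption $d_2\, \Eq(d_2) = d_2$) must be justified at the categorical level; in a regular category this is handled by the set-theoretic arguments via Barr's embedding that the paper already invokes. Once these standard facts are in place, the entire argument reduces to a single application of $3$-permutability to the pair of kernel pair equivalence relations $\Eq(d_1)$ and $\Eq(d_2)$, which is the categorical heart of the characterization.
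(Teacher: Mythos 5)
Your proposed cycle $(i)\Rightarrow(ii)\Rightarrow(iii)\Rightarrow(iv)\Rightarrow(i)$ is correct: the key computation $DD^\circ = d_2\,\Eq(d_1)\,d_2^\circ$ together with one application of $\Eq(d_1)\Eq(d_2)\Eq(d_1)=\Eq(d_2)\Eq(d_1)\Eq(d_2)$ and the absorption identities $d_2\,\Eq(d_2)=d_2$, $\Eq(d_2)\,d_2^\circ=d_2^\circ$ (valid since $d_2d_2^\circ\leqslant 1$ and $\Eq(d_2)\geqslant 1$) does yield $(i)\Rightarrow(ii)$, and the remaining implications are the easy containment arguments you give. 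The paper itself states this theorem as a citation from Carboni--Kelly--Pedicchio without proof, and your argument is essentially the standard one from that reference, so there is nothing to compare beyond noting that the relational-calculus identities you flag as needing justification are indeed routine in any regular category.
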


To adapt our approach from Mal'tsev categories to Goursat categories, we need a suitable property on relations guaranteeing that they are equivalence relations, where this property should be useful to characterize for Goursat categories. A first attempt would be to use the fact that, in a Goursat context, all reflexive and transitive relations are equivalence relations~\cite{M-FRVdL}. However, such a property on relations holds for any $n$-permutable category, $n\geqslant 2$, so it would not provide the needed characterization. The right property we were looking for was discovered after reading~\cite{Tull}, and is given in the next proposition:

\begin{proposition}\label{Goursat positive}
	A regular category $\C$ is a Goursat category if and only if any reflexive and positive relation in $\C$ is an equivalence relation.
\end{proposition}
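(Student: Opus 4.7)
The plan is to deduce both directions directly from Theorem~\ref{Goursat chars}, exploiting the fact that positive relations $P=U^\circ U$ are automatically symmetric and interact nicely with the identities $DD^\circ DD^\circ = DD^\circ$ and $EE^\circ = E^\circ E$.

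For the implication ``Goursat $\Rightarrow$ reflexive positive is an equivalence relation'', I take a reflexive and positive relation $P$ on some object $X$, and write $P = U^\circ U$ for some relation $U$. Symmetry of $P$ is immediate from $P^\circ = (U^\circ U)^\circ = U^\circ U = P$, and reflexivity is given. For transitivity, I apply Theorem~\ref{Goursat chars}(ii) to the relation $D := U^\circ$, obtaining
\[
PP \;=\; (DD^\circ)(DD^\circ) \;=\; DD^\circ DD^\circ \;=\; DD^\circ \;=\; P,
\]
so $PP \leqslant P$, and $P$ is an equivalence relation.

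For the converse, I assume every reflexive and positive relation is an equivalence relation and use condition (iii) of Theorem~\ref{Goursat chars}: it suffices to show that $EE^\circ$ is an equivalence relation for every reflexive relation $E$ on $X$. Reflexivity of $EE^\circ$ follows from the reflexivity of $E$ (if $1_X \leqslant E$, then also $1_X \leqslant E^\circ$ and so $1_X \leqslant EE^\circ$). Positivity is witnessed by taking $U := E^\circ$, which gives $U^\circ U = (E^\circ)^\circ E^\circ = EE^\circ$. By assumption, $EE^\circ$ is then an equivalence relation, and Goursat-ness follows from Theorem~\ref{Goursat chars}(iii).

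There is no real obstacle: the content of the proposition lies essentially in recognising that the identity $DD^\circ DD^\circ = DD^\circ$ from Theorem~\ref{Goursat chars}(ii) is exactly the statement that every relation of the form $DD^\circ$ (i.e.\ every positive relation) is idempotent under composition, and hence transitive. The only care needed is to match the form $U^\circ U$ in the definition of positivity with the form $DD^\circ$ appearing in Theorem~\ref{Goursat chars}, which is handled by the substitution $D = U^\circ$ (respectively $U = E^\circ$) in the two directions.
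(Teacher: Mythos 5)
Your proof is correct and follows essentially the same route as the paper's: symmetry of a positive relation is automatic, transitivity of $P=U^\circ U$ comes from Theorem~\ref{Goursat chars}(ii), and the converse applies the hypothesis to $EE^\circ$ together with Theorem~\ref{Goursat chars}(iii). Your explicit substitution $D=U^\circ$ (resp.\ $U=E^\circ$) just makes precise a step the paper leaves implicit.
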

\begin{proof}
	Suppose that $\C$ is a Goursat category and consider a reflexive and positive relation $P$ with $1_X\leqslant P=U^\circ U$. The fact that $P$ is positive implies that it is symmetric. As for the transitivity of $P$, we have $PP=U^\circ UU^\circ U=U^\circ U$, by Theorem~\ref{Goursat chars} (i) $\Rightarrow$ (ii). It follows that $PP=P$.
	
	Conversely, let $E$ be a reflexive relation on $X$. Then $EE^\circ$ is a reflexive and positive relation, thus an equivalence relation by assumption. It follows that $\C$ is a Goursat category by Theorem~\ref{Goursat chars} (iii) $\Rightarrow$(i).
\end{proof}

If $\C$ is a Goursat category, then the lattice of equivalence relations on the same object is modular~\cite{CKP} and, consequently, the Shifting Lemma holds. Moreover, the Shifting Lemma still holds when $S$ is just a reflexive relation, as we show next. The following result is partly based on Lemma 2.2 in~\cite{Kiss}, and it gives a first step towards the characterization we aim to obtain for Goursat categories.

\begin{proposition}
	\label{Goursat => SL}
	In any regular Goursat category $\C$, the Shifting Lemma holds when $S$ is a reflexive relation and $R$ and $T$ are equivalence relations.
\end{proposition}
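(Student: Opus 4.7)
The plan is to reduce the reflexive-$S$ case of the Shifting Lemma to the equivalence-relation case, which is already known to hold in $\C$ as a consequence of the modularity of the lattice of equivalence relations in any regular Goursat category (cf.\ Remark~\ref{join}).

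First, the Goursat property is used to produce an equivalence relation from $S$ and $T$: since $S^\circ TS=(TS)^\circ(TS)$ is a reflexive and positive relation, Proposition~\ref{Goursat positive} ensures that it is an equivalence relation on $X$ containing $T$. The shifting hypothesis then translates into $(u,v)\in R$ together with $(u,v)\in S^\circ (R\wedge T)S\leq S^\circ TS$, using the witnesses $x$ and $y$, so that $(u,v)\in R\wedge S^\circ TS$.

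It therefore suffices to establish the inclusion $R\wedge S^\circ TS\leq T$, from which $(u,v)\in T$ follows immediately. For this, one takes a pair $(a,b)\in R\wedge S^\circ TS$, which amounts to $(a,b)\in R$ together with the existence of witnesses $c,d\in X$ satisfying $(c,a),(d,b)\in S$ and $(c,d)\in T$, and must conclude $(a,b)\in T$. Following the strategy of Lemma~2.2 in~\cite{Kiss}, this is achieved by combining the $3$-permutability $R\vee T=RTR$ of Remark~\ref{join} with the Shifting Lemma for equivalence relations applied to a suitable auxiliary configuration, in order to promote the given data into a standard shifting configuration where the hypothesis $R\wedge S\leq T$ can be directly invoked.

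The main obstacle is this last step. The difficulty lies in the fact that the hypothesis $R\wedge S\leq T$ only constrains the reflexive relation $S$, while the desired conclusion $R\wedge S^\circ TS\leq T$ concerns the larger equivalence relation $S^\circ TS$; bridging this gap requires a delicate combined use of both the $3$-permutability and the modular lattice structure of equivalence relations in $\C$.
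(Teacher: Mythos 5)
There is a genuine gap: the argument stops exactly where the proof has to begin. Your preliminary observations are correct --- $S^\circ TS=(TS)^\circ(TS)$ is reflexive and positive, hence an equivalence relation containing $T$ by Proposition~\ref{Goursat positive}, and the shifting data give $(u,v)\in R\wedge S^\circ(R\wedge T)S$. But the inclusion $R\wedge S^\circ TS\leqslant T$ that you declare ``it suffices to establish'' is not a reduction of the problem: after the harmless normalization replacing $T$ by $R\wedge T$ (so that $T\leqslant R$), a pair $(a,b)$ lies in $R\wedge S^\circ TS$ exactly when it sits at the bottom of a diagram \eqref{SL}, so the inclusion is \emph{verbatim} the statement of the Shifting Lemma for the triple $(R,S,T)$. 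Worse, in the unnormalized form in which you state it, the inclusion is strictly stronger than what is to be proved, since your witnesses $c,d$ are only required to satisfy $(c,d)\in T$ rather than $(c,d)\in R\wedge T$, and nothing in your text justifies that strengthening. The final step, which gestures at ``combining $3$-permutability with the Shifting Lemma for equivalence relations applied to a suitable auxiliary configuration'', is precisely the entire content of the proposition and is left unproved; you acknowledge this yourself by calling it ``the main obstacle''.

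For comparison, the paper's proof does not stay on the object $X$ at all: it moves to the object $S$, builds the equivalence relations $R\Box S$, $W$ and $\Eq(s_2)$ on $S$ (this is where the hypothesis that $R$ and $T$ are equivalence relations while $S$ is merely reflexive is exploited), applies the ordinary Shifting Lemma on $S$ together with the Goursat formula $(\,R\Box S\wedge\Eq(s_2)\,)\vee W=(\,R\Box S\wedge\Eq(s_2)\,)\,W\,(\,R\Box S\wedge\Eq(s_2)\,)$ from Remark~\ref{join}, and only then extracts elements $a,b$ of $X$ to which the hypothesis $R\wedge S\leqslant T$ can be applied directly. Some such change of object (or an equivalent device, as in Lemma~2.2 of \cite{Kiss}) is needed; merely naming the target inclusion does not supply it.
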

\begin{proof}
	Let $R$ and $T$ be equivalence relations and let $S$ be a reflexive relation on an object $X$ such that $R\wedge S\leqslant T$. Suppose that we have $(x,y)\in R \wedge T$, $(x,u) \in S$, $(y,v)\in S$ and $(u,v)\in R$ as in \eqref{SL}. We consider the two equivalence relations on $S$ determined by the following pullbacks
	$$
	\xymatrix@C=55pt{
		R\Box S \ar@{ >->}[d]_-{\langle \pi_{13},\pi_{24}\rangle} \ar@{ >->}[r]^-{\langle \pi_{12},\pi_{34}\rangle} \pullback &
		S\times S \ar@{ >->}[d]^-{\langle s_1,s_2\rangle\times \langle s_1,s_2\rangle} \\
		R\times R \ar@{ >->}[r]_-{\langle r_1\times r_1,r_2\times r_2\rangle} & X\times X\times X\times X} \hspace{50pt}
	$$
	and
	$$
	\xymatrix@C=55pt{
		W \ar@{ >->}[d]_-{\langle \pi_{13},\pi_{24}\rangle} \ar@{ >->}[r]^-{\langle \pi_{12},\pi_{34}\rangle} \pullback &
		S\times S \ar@{ >->}[d]^-{\langle s_1,s_2\rangle\times \langle s_1,s_2\rangle} \\
		T\times R \ar@{ >->}[r]_-{\langle t_1\times r_1,t_2\times r_2\rangle} & X\times X\times X\times X.} \hspace{50pt}
	$$
	We have $(aSb, cSd)\in R\Box S$ if and only if
	$$
	\begin{array}{ccc}
	a & S & b \\
	R & & R \\
	c & S & d
	\end{array}
	$$
	and $(aSb, cSd)\in W$ if and only if
	$$
	\begin{array}{ccc}
	a & S & b \\
	T & & R \\
	c & S & d.
	\end{array}
	$$
	Note that they are in fact equivalence relations on $S$ since $R$ and $T$ are both equivalence relations.
	
	Given the equivalence relations $R\Box S$, $\Eq(s_2)$ and $W$ on $S$, Remark~\ref{join} yields the following description of the supremum of $R\Box S\wedge \Eq(s_2)$ and $W$ as equivalence relations on $S$:
	$$
	\begin{array}{lcl}
	(\,R\Box S\wedge \Eq(s_2)\,)\, \vee\, W & = & (\,R\Box S\wedge \Eq(s_2)\,)\,W\,(\, R\Box S\wedge \Eq(s_2)\,) \vspace{3pt}\\
	& = & W\,(\,R\Box S\wedge \Eq(s_2)\,)\,{W.}
	\end{array}
	$$
	Since $$R\Box S \wedge \Eq(s_2)\leqslant (\,R\Box S\wedge \Eq(s_2)\,)\,\vee\, W$$ we may apply the Shifting Lemma to the following diagram
	$$
	\xymatrix@C=45pt{
		xSu \ar@{-}[r]^-{\Eq(s_2)} \ar@{-}[d]^-{R\Box S} \ar@(l,l)@{-}[d]_-{(\, R\Box S\wedge \Eq(s_2)\,)\, \vee\,W} &
		uSu \ar@{-}[d]_-{R\Box S} \ar@(r,r)@{--}[d]^-{(\, R\Box S\wedge \Eq(s_2)\,)\, \vee\,W} \\
		ySv \ar@{-}[r]_-{\Eq(s_2)} & vSv.}
	$$
Note that, $uSu$ and $vSv$ by the reflexivity of $S$. We then obtain $$(uSu, vSv)\in (\, R\Box S\wedge \Eq(s_2)\,)\, \vee\,W.$$ Using $$(\, R\Box S\wedge \Eq(s_2)\,)\, \vee\,W=(\,R\Box S\wedge \Eq(s_2)\,)\,W\,(\, R\Box S\wedge \Eq(s_2)\,)$$ this means that
	$$
	(uSu) \left( R\Box S \wedge \Eq(s_2) \right) (aSu) W (bSv) \left( R\Box S \wedge \Eq(s_2) \right) (vSv),
	$$
	for some $a,b$ in $X$, i.e.
	$$
	\begin{array}{ccc}
	u & S & u\\
	R & & R \\
	a & S & u \\
	T & & R \\
	b & S & v \\
	R & & R \\
	v & S & v.
	\end{array}
	$$
	Since $aRu$ ($R$ is symmetric), $aSu$ and $R\wedge S\leqslant T$, it follows that $aT u$; similarly $bTv$. From $uTa$ ($T$ is symmetric), $aTb$ and $bTv$, we
	conclude that $uTv$ ($T$ is transitive), as desired.
\end{proof}

\begin{remark}
	The Shifting Lemma when $S$ is a reflexive relation and $R$ and $T$ are equivalence relations, as stated in Proposition~\ref{Goursat => SL}, is the categorical version of the {Shifting Principle} recalled in the Introduction. First, assuming that $R\wedge S\leqslant T$ is equivalent to assuming that $R\wedge S\leqslant T\leqslant R$ for the property of diagram \eqref{SL} to hold (take $R\wedge T$, which is such that $R\wedge S\leqslant R\wedge T\leqslant R$, for the non-obvious implication). Second, going carefully through the proofs in~\cite{Gumm}, one may check that the symmetry of $S$ is not necessary. So the {Shifting Principle} could equivalently be stated by asking that $S$ is just a reflexive and compatible relation.
\end{remark}

We now use Propositions~\ref{Goursat positive} and~\ref{Goursat => SL} to obtain the characterization of Goursat categories through a variation of the Shifting Lemma:

\begin{theorem}
	\label{Goursat <=> SL}
	Let $\C$ be a regular category. The following conditions are equivalent:
	\begin{enumerate}
		\item[(i)] $\C$ is a Goursat category;
		\item[(ii)] The Shifting Lemma holds in $\C$ when $S$ is a reflexive relation and $R$ and $T$ are reflexive and positive relations.
	\end{enumerate}
\end{theorem}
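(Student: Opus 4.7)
The plan for $(i) \Rightarrow (ii)$ is immediate: if $\C$ is a Goursat category, then by Proposition~\ref{Goursat positive} any reflexive and positive relations $R$ and $T$ are in fact equivalence relations, and the Shifting Lemma in this situation (with $S$ only assumed reflexive) is precisely Proposition~\ref{Goursat => SL}.

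For $(ii) \Rightarrow (i)$, by Proposition~\ref{Goursat positive} it suffices to prove that any reflexive positive relation $P$ on an object $X$ is an equivalence relation. Positivity already yields symmetry, so only transitivity remains to be checked. I would write $P = U^\circ U$ for some relation $\mono{U}{X\times Y}$ with projections $u_1, u_2$. Given $(x, u) \in P$ and $(u, v) \in P$, the defining property of $P = U^\circ U$ produces witnesses $z_0, z_1$ such that $(x, z_0), (u, z_0), (u, z_1), (v, z_1) \in U$; write $w_1 := (x, z_0)$, $w_2 := (u, z_0)$, $w_3 := (u, z_1)$, $w_4 := (v, z_1)$ for the resulting generalized elements of $U$.

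The main construction is then the choice of three relations on the object $U$ to which the hypothesis $(ii)$ will be applied. Take $T := (u_1 \times u_1)^{-1}(P)$, which is reflexive and positive because the pullback of a positive relation along a morphism is positive (explicitly, $T = (Uu_1)^\circ (Uu_1)$). Take $R := U \times U$, the largest relation on $U$; it is reflexive, and it is positive since $R = V^\circ V$ for the unique morphism $V \colon U \to 1$. Finally, take $S := \Eq(u_2)$, the kernel pair of $u_2$: it is an equivalence relation and in particular reflexive. The compatibility condition $R \wedge S \leq T$ reduces to $\Eq(u_2) \leq T$, which holds because two generalized elements of $U$ with the same $u_2$-component yield a pair of $u_1$-components lying in $U^\circ U = P$.

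Applying the assumed Shifting Lemma with $(a, b) := (w_2, w_3)$ and $(c, d) := (w_1, w_4)$ then finishes the argument: $(w_2, w_3) \in R \wedge T$ holds because $R = U\times U$ contains every pair and $(u, u) \in P$ by reflexivity; the pairs $(w_2, w_1)$ and $(w_3, w_4)$ lie in $\Eq(u_2)$ by construction; and $(w_1, w_4) \in R$ is trivial. The conclusion $(w_1, w_4) \in T$ then unfolds to $(u_1 w_1, u_1 w_4) = (x, v) \in P$, which is exactly the transitivity we wanted. The main conceptual obstacle is to find a reflexive positive $R$ that relates the prescribed pairs without begging the question: a naive analogue of the Mal'tsev proof, where one would pull $P$ back along a ``diagonal'' map such as $\langle u_1 \pi_1, u_1 \pi_2 \rangle$ composed with a swap, fails to produce a positive relation, whereas the very flat choice $R = U \times U$, positive via the unique morphism to the terminal object, works without difficulty.
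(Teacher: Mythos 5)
Your proof is correct, but the implication (ii) $\Rightarrow$ (i) takes a genuinely different route from the paper's. The paper verifies characterization (iv) of Theorem~\ref{Goursat chars} directly: given a reflexive relation $E$ on $X$ and $(x,y)\in EE^\circ$ with witness $z$, it applies the hypothesis to the relations $S=E$, $R=EE^\circ$, $T=E^\circ E$ \emph{on the object $X$ itself}, placing $z,x,z,y$ in the four corners; the inequality $EE^\circ\wedge E=E\leqslant E^\circ E$ and the reflexivity of $EE^\circ$ and $E^\circ E$ make the square commute, and one application of the Shifting Lemma gives $(x,y)\in E^\circ E$. You instead verify Proposition~\ref{Goursat positive} head-on, proving transitivity of an arbitrary reflexive positive $P=U^\circ U$ by constructing three relations on the auxiliary object $U$, namely $S=\Eq(u_2)$, $R=U\times U$ and $T=(u_1\times u_1)^{-1}(P)$; this mirrors the structure of the Mal'tsev argument in Theorem~\ref{Mal'tsev <=> SL}, and the two observations you need --- that $U\times U=V^\circ V$ for $V\colon U\to 1$ and that $(u_1\times u_1)^{-1}(U^\circ U)=(Uu_1)^\circ(Uu_1)$ --- are both valid in a regular category (and easily checked via the embedding theorem). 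What the paper's choice buys is economy (no auxiliary object, no verification that the constructed relations are positive) and, as a by-product, the subsequent Corollary that the Shifting Lemma is only needed for diagrams of the special shape \eqref{SLR}; what your choice buys is a uniform template extending the Mal'tsev proof, together with the instructive remark that the ``diagonal'' pullback used there must be replaced by the full relation $U\times U$ to preserve positivity.
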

\begin{proof}
	(i) $\Rightarrow$ (ii) This implication follows from the fact that reflexive and positive relations are necessarily equivalence relations in the Goursat context (Proposition~\ref{Goursat positive}) and from Proposition~\ref{Goursat => SL}. \\
	(ii) $\Rightarrow$ (i) We shall prove that for any reflexive relation $E$ on $X$ in $\C$, $EE^\circ= E^\circ E$ (see Theorem~\ref{Goursat chars} (iv)). Suppose that $(x,y)\in EE^\circ$. Then, for some $z$ in $X$, one has that $(z,x)\in E$ and $(z,y)\in E$. Consider the reflexive and positive relations $R=EE^\circ$ and $T=E^\circ E$, and the reflexive relation $E$ on $X$. From the reflexivity of $E$, we get $E\leqslant EE^\circ$ and $E\leqslant E^\circ E$; thus $EE^\circ \wedge E=E\leqslant E^\circ E$. We may apply our assumption to the following relations given in solid lines:
	\begin{equation}\label{SLR}
	\vcenter{\xymatrix@C=45pt{
			z \ar@{-}[r]^-E \ar@{-}[d]^-{EE^\circ} \ar@(l,l)@{-}[d]_-{E^\circ E} & x \ar@{-}[d]_-{EE^\circ} \ar@(r,r)@{--}[d]^-{E^\circ E} \\
			z \ar@{-}[r]_-E  & y }}
		\end{equation}	
	to conclude that $(x,y)\in E^\circ E$. Having proved that $EE^\circ \leqslant E^\circ E$ for \emph{every} reflexive relation $E$, the equality $E^\circ E\leqslant EE^\circ$ follows immediately.
\end{proof}
In the proof of the implication (ii) $\Rightarrow$ (i) we  used the relations $EE^o$ and $E^oE$ with $E$ a reflexive relation. This observation gives:
	\begin{corollary}
		Let $\C$ be a regular category. The following conditions are equivalent:
		\begin{enumerate}
			\item[(i)] $\C$ is a Goursat category;
			\item[(ii)] The Shifting Lemma holds in $\C$ for a diagram as \eqref{SLR}, with $E$ any reflexive relation.
		\end{enumerate}
\end{corollary}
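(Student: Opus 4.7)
The plan is to observe that this corollary is essentially a refinement of Theorem~\ref{Goursat <=> SL}: the hypothesis (ii) here is weaker than the one in the theorem (it only demands the Shifting Lemma for the very specific shape \eqref{SLR}, with $R=EE^\circ$ and $T=E^\circ E$ determined by $E$), yet it already suffices to characterize Goursat categories. So one direction will be extracted from Theorem~\ref{Goursat <=> SL}, and the other will follow by a careful inspection of the proof of that theorem.

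For (i)~$\Rightarrow$~(ii), the argument is immediate. Given any reflexive relation $E$, I first check that $EE^\circ$ and $E^\circ E$ are reflexive and positive: positivity is by definition, and reflexivity follows from $1_X\leqslant E$, which yields $1_X=1_X\cdot 1_X^\circ\leqslant EE^\circ$ (and similarly for $E^\circ E$). Therefore a diagram of the form \eqref{SLR} is a particular case of a diagram where $S$ is reflexive and $R,T$ are reflexive and positive, and the conclusion follows at once from Theorem~\ref{Goursat <=> SL}.

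For (ii)~$\Rightarrow$~(i), the strategy is to reproduce the second half of the proof of Theorem~\ref{Goursat <=> SL} verbatim, verifying that it only ever invokes the Shifting Lemma on a diagram of the prescribed shape \eqref{SLR}. Concretely, to establish that $\C$ is Goursat, I would use Theorem~\ref{Goursat chars}(iv) and show that $EE^\circ=E^\circ E$ for every reflexive relation $E$. Take $(x,y)\in EE^\circ$, so that $(z,x),(z,y)\in E$ for some $z$. With $S=E$, $R=EE^\circ$, $T=E^\circ E$, the reflexivity of $E$ gives $E\leqslant EE^\circ$ and $E\leqslant E^\circ E$, whence $EE^\circ\wedge E=E\leqslant E^\circ E$, i.e.\ $R\wedge S\leqslant T$. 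The diagram in \eqref{SLR} is then precisely the situation to which hypothesis (ii) applies, and it yields $(x,y)\in E^\circ E$. Hence $EE^\circ\leqslant E^\circ E$ for every reflexive $E$. Applying this inequality to the reflexive relation $E^\circ$ gives $E^\circ E\leqslant EE^\circ$, and the desired equality follows.

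There is no real obstacle here; the only point that needs some care is the bookkeeping check that the proof of the implication (ii)~$\Rightarrow$~(i) in Theorem~\ref{Goursat <=> SL} never uses the Shifting Lemma outside the specific configuration \eqref{SLR}, which is manifest from that proof. This is why the weaker hypothesis of the corollary is already enough to characterize Goursat categories.
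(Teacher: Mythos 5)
Your proposal is correct and follows the paper's own reasoning exactly: the forward direction is a special case of Theorem~\ref{Goursat <=> SL} since $EE^\circ$ and $E^\circ E$ are reflexive and positive, and the converse is the observation that the proof of (ii) $\Rightarrow$ (i) of that theorem only ever invokes the Shifting Lemma for a diagram of shape \eqref{SLR}. Your explicit derivation of $E^\circ E\leqslant EE^\circ$ by applying the established inequality to the reflexive relation $E^\circ$ is a nice touch, matching the paper's ``follows immediately.''
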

The fact that any quasi-variety is a regular category \cite{PV} implies the following 
\begin{corollary}
	Let $\V$ be a quasi-variety. The following conditions are equivalent:
	\begin{enumerate}
		\item[(i)] $\V$ is $3$-permutable;
		\item[(ii)] The Shifting Lemma holds in $\V$ when $S$ is a reflexive compatible relation, and $R$ and $T$ are reflexive and positive compatible relations.
	\end{enumerate}
\end{corollary}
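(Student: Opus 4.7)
The plan is to deduce this corollary as an immediate instance of Theorem~\ref{Goursat <=> SL} applied to the regular category~$\V$. First I would recall the fact cited in the paper that every quasi-variety is a regular category (see \cite{PV}), so that the hypotheses of Theorem~\ref{Goursat <=> SL} are available for $\V$. Accordingly, being a Goursat category and satisfying the indicated variation of the Shifting Lemma are already known to be equivalent in $\V$; what remains is to translate both conditions into their concrete algebraic formulations.

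Next I would make the dictionary between the categorical and algebraic vocabularies explicit. In a quasi-variety, internal relations from $X$ to $Y$ are precisely the subalgebras of $X \times Y$, i.e.\ the compatible relations in the classical sense, and finite limits (and in particular the opposite relation $R^{\circ}$ and the relational composite $SR$) are computed at the level of underlying sets. Hence "reflexive internal relation" and "reflexive and positive internal relation" become "reflexive compatible relation" and "reflexive and positive compatible relation"; and the diagram \eqref{SL}, interpreted in $\V$, is exactly the set-theoretic Shifting Lemma as originally stated by Gumm, applied to compatible relations. This is the usual content of Metatheorem A.5.7 of \cite{BB}, to which the paper already appeals.

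Finally I would note that for a regular category the property of being Goursat is by definition the $3$-permutability of the composition of internal equivalence relations, and that in a quasi-variety internal equivalence relations coincide with congruences; hence $\V$ is a Goursat category precisely when $\V$ is $3$-permutable in the standard algebraic sense. Combining these identifications with Theorem~\ref{Goursat <=> SL} applied to $\V$ yields the stated equivalence.

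There is essentially no technical obstacle in the argument; the only point that requires any care is the verification that the categorical notion of "positive internal relation" $P = U^{\circ}U$ coincides, in a quasi-variety, with the natural algebraic notion (a compatible relation of the form $\{(x,x') \mid \exists y\colon (x,y),(x',y) \in U\}$ for some compatible $U$). This follows directly from the fact that (regular epimorphism, monomorphism) factorisations in $\V$ are computed as image factorisations at the set-theoretic level, so no further work is needed.
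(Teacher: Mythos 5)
Your proposal is correct and follows exactly the paper's route: the authors also derive this corollary directly from Theorem~\ref{Goursat <=> SL} via the fact that every quasi-variety is regular \cite{PV}, leaving the translation between internal relations and compatible relations implicit. Your explicit dictionary (internal relations as subalgebras of products, positivity via set-level image factorisations, Goursat as algebraic $3$-permutability) is a careful spelling-out of what the paper takes for granted, not a different argument.
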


\end{document}